\documentclass[11pt]{article}

\usepackage{amsmath,amssymb,amsthm,mathrsfs}
\usepackage{hyperref}
\usepackage{enumitem}

\usepackage{authblk}

\title{\bf Mayer--Vietoris and Twisted \v{C}ech Spectral Sequences for C$^*$-Algebras with Free Quantum Group Coefficients}

\author{\Large Takao Inou\'{e}}

\affil{\large Faculty of Informatics, Yamato University, \\ Osaka, Japan\footnote{Email: inoue.takao@yamato-u.ac.jp; \\ Personal Email: takaoapple@gmail.com \\ [I prefer my personal email address for correspondence.]}} 
\date{February 26, 2026}

\newtheorem{theorem}{Theorem}[section]
\newtheorem{proposition}[theorem]{Proposition}

\newtheorem{corollary}[theorem]{Corollary}
\newtheorem{definition}[theorem]{Definition}
\newtheorem{remark}[theorem]{Remark}

\newtheorem{assumption}[theorem]{Assumption}

\begin{document}
\maketitle

\begin{abstract}
We formulate a Mayer--Vietoris/\v{C}ech viewpoint on $K$-theory for crossed products by discrete quantum groups, emphasizing how local-to-global gluing data interacts with quantum-group coefficients.
Starting from a $G$-equivariant ideal cover and the associated Mayer--Vietoris six-term exact sequence, we package the resulting $K$-theoretic computation into a \v{C}ech-type spectral sequence whose $E^1$-page is explicitly described by iterated intersections.
We then introduce a minimal ``twisted'' gluing mechanism controlled by a $\mathbb Z/2$-valued \v{C}ech $2$-cocycle and an involutive automorphism of the coefficient algebra. Under a Kirchberg--UCT hypothesis on the quantum-group crossed-product coefficient, the twist produces a nontrivial differential $d_2$ identified as $\varphi_*-\mathrm{id}$ on coefficient $K$-theory.
In a concrete regime where the coefficient $K$-groups are cyclic (e.g.\ order $3$), the differential becomes an isomorphism and forces a $K$-theoretic obstruction to Morita triviality. This yields a conceptual mechanism for producing non-Morita-trivial twisted C$^*$-algebras with quantum-group crossed-product fibers, detected purely by Mayer--Vietoris/\v{C}ech data.
\end{abstract}

\textbf{Keywords}:  C*-algebras; Mayer–Vietoris sequence; Čech spectral sequence;
twisted K-theory; crossed products; free quantum groups;
groupoid C*-algebras; Fell bundles; Morita equivalence

\medskip

\textbf{MSC}: Primary 46L80;
Secondary 46L55, 46L85, 58B34, 20G42

\tableofcontents

\section{Introduction}

\subsection{Motivation: MV gluing with quantum-group coefficients}
Mayer--Vietoris (MV) exact sequences are classical tools for computing $K$-theory of C$^*$-algebras built by gluing local pieces.
In noncommutative topology, the MV sequence for an ideal cover
\[
A = I_1 + I_2
\]
arises from a short exact sequence
\[
0 \to I_1\cap I_2 \to I_1\oplus I_2 \to A \to 0,
\]
and the connecting map $\partial$ is the fundamental obstruction to global compatibility.

In parallel, discrete quantum groups and their crossed products provide a rich source of noncommutative spaces whose $K$-theory is amenable to computation via Baum--Connes-type machinery and Pimsner--Voiculescu (PV)-type exact sequences in the quantum setting. A guiding example throughout is the free orthogonal quantum group $O_n^+$ and the associated crossed product
\[
D := \mathcal O_n \rtimes_r \widehat{O_n^+}.
\]

This note records a clean interface between these two viewpoints:
\begin{itemize}[leftmargin=2em]
\item MV exact sequences (and their \v{C}ech refinements) for \emph{equivariant} ideal covers;
\item quantum-group coefficients encoded by a single crossed product $D$;
\item a minimal ``higher'' twisting mechanism producing a genuinely nontrivial $d_2$-differential.
\end{itemize}

\subsection{What is new in this note}
We highlight three contributions, each MV-conscious:
\begin{enumerate}[leftmargin=2em]
\item \textbf{MV as a \v{C}ech differential with coefficients.}
We record the MV boundary map as the \v{C}ech differential $d_1$ (alternating restriction) in a spectral sequence whose $E^1$-page is built from iterated intersections of an equivariant ideal cover.

\item \textbf{Quantum-group crossed products as coefficient fibers.}
In a basic ``base $\times$ fiber'' situation (base fixed by the quantum group), we identify all $E^1$-terms with suspensions of $K_*(D)$, reducing the MV/\v{C}ech computation to the combinatorics of the nerve.

\item \textbf{A minimal twist producing a nontrivial $d_2$ and a Morita obstruction.}
We introduce a $\mathbb Z/2$-valued \v{C}ech $2$-cocycle $c$ together with an involutive automorphism $\varphi\in\mathrm{Aut}(D)$.
Under a Kirchberg--UCT hypothesis on $D$, we show that the first potentially higher differential satisfies
\[
d_2 = \varphi_* - \mathrm{id}
\]
on coefficient $K$-theory. In a cyclic coefficient regime (e.g.\ $K_*(D)\cong \mathbb Z/3$ and $\varphi_*=-1$), this forces $K_*(A_c)\not\cong K_*(C(S^2)\otimes D)$ and hence $A_c$ is not Morita equivalent to the trivial field.
\end{enumerate}

\subsection{Comparison with prior work}
The ingredients are classical, but their combination here is tailored to MV-gluing:
\begin{itemize}[leftmargin=2em]
\item \textbf{Quantum PV/Baum--Connes.}
PV-type exact sequences for free quantum groups and Baum--Connes approaches provide computations of $K_*(\mathcal O_n\rtimes \widehat{O_n^+})$ in suitable settings (see e.g.\ Vergnioux--Voigt and related work).

\item \textbf{Groupoid and Fell-bundle models of twists.}
Twisted groupoid C$^*$-algebras and Fell bundles (Renault; Kumjian; Muhly--Williams) give standard C$^*$-algebraic realizations of cocycle twists.

\item \textbf{MV and \v{C}ech spectral sequences.}
MV six-term sequences and their \v{C}ech refinements are ubiquitous in $K$-theory; the present note emphasizes a concrete coefficient reduction to a single quantum crossed-product fiber and isolates an explicit $d_2$-mechanism driven by an involution on coefficients.
\end{itemize}

\section{Preliminaries}

\subsection{Equivariant ideal covers and the MV short exact sequence}
Let $A$ be a C$^*$-algebra and $I_1,I_2\triangleleft A$ closed two-sided ideals with $A=I_1+I_2$.
Then the map
\[
\Delta: I_{12}:=I_1\cap I_2 \to I_1\oplus I_2,\quad \Delta(a)=(a,-a),
\]
and
\[
\Sigma:I_1\oplus I_2\to A,\quad \Sigma(x,y)=x+y,
\]
yield a short exact sequence
\begin{equation}\label{eq:MVshort}
0 \longrightarrow I_{12}\xrightarrow{\Delta} I_1\oplus I_2 \xrightarrow{\Sigma} A\longrightarrow 0.
\end{equation}
Applying $K$-theory gives the MV six-term exact sequence; the connecting map $\partial$ is the boundary map associated to \eqref{eq:MVshort}.

\subsection{\v{C}ech viewpoint: $d_1$ as alternating restriction}
For a finite cover $\{I_i\}_{i\in I}$ with $A=\sum_i I_i$, set
\[
I_{i_0\cdots i_p}:= I_{i_0}\cap\cdots\cap I_{i_p}.
\]
The \v{C}ech-type chain complex in degree $p$ is
\[
C_p^{(q)} := \bigoplus_{i_0<\cdots<i_p} K_q(I_{i_0\cdots i_p}),
\]
and the differential $d_1:C_p^{(q)}\to C_{p-1}^{(q)}$ is the usual alternating sum of inclusions (restrictions). For a two-term cover, $d_1$ coincides with the MV map induced from $\Delta(a)=(a,-a)$.

\subsection{Free orthogonal quantum group and a coefficient crossed product}
Let $O_n^+$ denote the free orthogonal quantum group and $\widehat{O_n^+}$ its discrete dual.
We consider a standard (quasi-free) coaction
\[
\delta:\mathcal O_n \to \mathcal O_n\otimes C(O_n^+),\qquad
\delta(S_i)=\sum_{j=1}^n S_j\otimes u_{ji}.
\]
We write
\[
D:= \mathcal O_n \rtimes_r \widehat{O_n^+}
\]
for the reduced crossed product coefficient algebra.

\begin{remark}[Coefficient reduction principle]
In ``base $\times$ fiber'' situations where the quantum group acts trivially on the base, equivariant ideals coming from the base lift to crossed-product ideals whose $K$-theory is controlled by suspensions of $K_*(D)$.
\end{remark}

\section{MV and a \v{C}ech spectral sequence with quantum-group coefficients}

\subsection{A basic model: trivial base action}
Let $X$ be a compact space and set
\[
B:=C(X)\otimes \mathcal O_n
\]
with coaction $\alpha:=\mathrm{id}_{C(X)}\otimes \delta$.
Let
\[
A:= B\rtimes_r \widehat{O_n^+}.
\]
If $\{U_i\}$ is an open cover of $X$, define equivariant ideals
\[
J_i:=C_0(U_i)\otimes\mathcal O_n \triangleleft B,
\qquad
I_i:=J_i\rtimes_r \widehat{O_n^+}\triangleleft A.
\]
Then $A=\sum_i I_i$ and we may form the \v{C}ech-type spectral sequence associated to $\{I_i\}$.

\subsection{Explicit $E^1$-page under contractibility hypotheses}
Assume that all finite intersections $U_{i_0\cdots i_p}$ are contractible.
Then (morally, and in standard exactness regimes for reduced crossed products in this setting) one has
\[
I_{i_0\cdots i_p} \cong C_0(U_{i_0\cdots i_p})\otimes D,
\]
hence
\[
K_q(I_{i_0\cdots i_p}) \cong K_{q-1}(D)
\]
because $C_0(U_{i_0\cdots i_p})$ is (stably) a suspension factor.
Therefore the entire $E^1$-page is described in terms of $K_{q-1}(D)$ tensored with the cellular combinatorics of the nerve of the cover.

\subsection{Example: interval covers and immediate degeneration}
For $X=[0,1]$ and a good two- or three-term cover with contractible overlaps, the nerve is contractible, and the \v{C}ech homology collapses:
\[
E^2_{0,q}\cong K_{q-1}(D),\qquad E^2_{p,q}=0\ (p\ge 1).
\]
Consequently,
\[
K_q(A)\cong K_{q-1}(D),
\]
recovering the MV computation in a transparent \v{C}ech form.

\section{A minimal twist: a $\mathbb Z/2$ \v{C}ech 2-cocycle and an involution}

\subsection{Twisting data}
Let $X=S^2$ and let $\mathfrak U=\{U_i\}$ be a good cover.
Fix:
\begin{itemize}[leftmargin=2em]
\item a nontrivial class $c\in \check H^2(S^2;\mathbb Z/2)\cong \mathbb Z/2$ represented by a \v{C}ech $2$-cocycle $\{c_{ijk}\}$;
\item an involutive automorphism $\varphi\in\mathrm{Aut}(D)$ with $\varphi^2=\mathrm{id}$.
\end{itemize}
Heuristically, the cocycle $c$ prescribes on triple overlaps whether the composition constraint is untwisted or twisted by $\varphi$.

\begin{remark}[C$^*$-algebraic model]
One may realize this data as a twist (in the sense of a twisted Fell bundle) over the \v{C}ech groupoid $\mathcal G_{\mathfrak U}$ of the cover.
The resulting C$^*$-algebra may be denoted
\[
A_c := C^*(\mathcal G_{\mathfrak U}, \mathcal B_{\varphi,c}),
\]
which is a ``twisted'' continuous field with constant fiber $D$ but nontrivial higher gluing.
\end{remark}

\subsection{Kirchberg--UCT hypothesis (assumption)}
\begin{definition}[UCT Kirchberg assumption]\label{def:UCTKirchberg}
We say that the coefficient algebra $D$ satisfies \textbf{Assumption (K)} if $D$ is separable, nuclear, simple, purely infinite, and satisfies the UCT.
\end{definition}

\begin{remark}
Assumption (K) is used only as an organizing hypothesis ensuring that $K$-theoretic computations and automorphism constructions behave as in the Kirchberg classification regime. In applications one may treat (K) as an explicit standing assumption.
\end{remark}

\subsection{\texorpdfstring{The \(d_2\)-mechanism}{The d2-mechanism}}
The twist $c$ is invisible to the ordinary $d_1$ \v{C}ech differential (which remains the alternating restriction), but it affects the first potentially higher compatibility condition, producing a nontrivial $d_2$.

\begin{theorem}[Identification of the first higher differential]\label{thm:d2}
Let $X=S^2$ with a good cover $\mathfrak U$ and let $A_c$ be the twisted algebra determined by $(\varphi,c)$ as above.
Assume the coefficient reduction hypotheses so that the associated \v{C}ech--MV spectral sequence has
\[
E^2_{2,q}\cong K_q(D),\qquad E^2_{0,q+1}\cong K_{q+1}(D).
\]
If $c$ represents the nontrivial element of $\check H^2(S^2;\mathbb Z/2)$, then the differential
\[
d_2:\ E^2_{2,q}\to E^2_{0,q+1}
\]
is given on coefficient groups by
\[
d_2 = \varphi_*-\mathrm{id}:K_q(D)\to K_{q+1}(D),
\]
up to the conventional identifications of the spectral sequence.
\end{theorem}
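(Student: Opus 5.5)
The plan is to reduce the computation of $d_2$ to a single model situation, namely the clutching of a bundle over $S^2$, and read off the differential from the Mayer--Vietoris boundary of that model. First, I would replace the good cover $\mathfrak U$ by a cover refined enough that the cocycle $\{c_{ijk}\}$ is supported on one triple overlap. This uses only that $c$ generates $\check H^2(S^2;\mathbb Z/2)$, and changing $c$ by a coboundary changes $A_c$ by a Morita equivalence over $X$. I would then compare the twisted spectral sequence with the untwisted one, i.e.\ that of $C(S^2)\otimes D$ from Section~3. The $E^1$-pages and the $d_1$ differentials coincide, because $d_1$ is alternating restriction and the twist only enters on triple overlaps. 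Hence the identifications $E^2_{2,q}\cong K_q(D)$ and $E^2_{0,q+1}\cong K_{q+1}(D)$ in the hypothesis are the same nerve-homology identifications in both cases ($S^2$ has $\check H_0=\check H_2=\mathbb Z$ and $\check H_1=0$). In the untwisted case all higher differentials vanish, since the spectral sequence is a tensor product with $K_*(D)$.

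Second, I would realize $A_c$ via a two-set decomposition $S^2=D_+\cup D_-$ with equator $S^1$. The twist localized at a single triple overlap becomes an identification of the fibers over the equator that is the identity on one arc and $\varphi$ on the complementary arc. Equivalently, it is a clutching function $S^1\to\mathrm{Aut}(D)$ whose winding part is measured by a path from $\mathrm{id}$ to $\varphi$. This is where Assumption (K) enters. By Kirchberg--Phillips and the UCT, automorphisms are classified up to (asymptotic) unitary equivalence by $KK$-classes, so the clutching datum is determined up to homotopy by $\varphi$ together with the class of the loop. The choice of lift does not affect $d_2$ modulo the indeterminacy allowed by ``conventional identifications''. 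The algebra $A_c$ then sits in the Mayer--Vietoris sequence \eqref{eq:MVshort} for the ideals over the two (slightly enlarged) hemispheres, whose intersection is $C_0(S^1\times(-1,1))\otimes D$.

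Third, I would compute the MV boundary for this clutched algebra. The map $K_*(I_+)\oplus K_*(I_-)\to K_*(I_{+-})$ becomes, on the $S^1$-generator, the difference of the two restrictions, one twisted by $\varphi_*$. On the top-cell class, which corresponds to $E^2_{2,q}$ after the suspension shift, this difference is $\varphi_*-\mathrm{id}$. Matching the two-step filtration of the MV sequence with the filtration of the \v{C}ech spectral sequence (bottom cell $=$ column $0$, top cell $=$ column $2$) identifies this transgression with $d_2$. The main obstacle is precisely this matching step. One must show that the \v{C}ech spectral sequence for the fine cover and the two-piece MV computation are compatible, including the degree shift $q\mapsto q+1$ and signs. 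I would attempt it via a map of filtered $C^*$-algebras, namely the skeletal filtration of $S^2$ pulled back to $A_c$, so that both spectral sequences arise from the same exact couple, where $d_2$ is the standard ``boundary then lift'' construction, here equal to $\varphi_*-\mathrm{id}$.
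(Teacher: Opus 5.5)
There is a genuine gap in your second and third steps. The paper gives no proof of this theorem, only the holonomy heuristic in the remark that follows it, so your argument has to stand on its own.

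\textbf{Step two.} You need a continuous clutching function $S^1\to\mathrm{Aut}(D)$ that equals $\mathrm{id}$ on one arc and $\varphi$ on the other, hence a path from $\mathrm{id}$ to $\varphi$ in $\mathrm{Aut}(D)$. Such a path is a homotopy of $*$-homomorphisms, so it forces $\varphi_*=\mathrm{id}$ on $K_*(D)$. Exactly in the regime where the theorem has content (e.g.\ $\varphi_*=-1$ on $\mathbb Z/3$), no such clutching exists. Kirchberg--Phillips cannot rescue this, because the $KK$-class it classifies by is precisely what separates $\varphi$ from $\mathrm{id}$. If you glue discontinuously instead, the fibres at the two jump points become the fixed-point algebra $D^{\varphi}$, and you no longer have a field with fibre $D$.

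The same defect is already present upstream. A relation $g_{ij}g_{jk}=\varphi^{c_{ijk}}g_{ik}$ in $\mathrm{Aut}(D)$ does not glue to anything with fibre $D$: compatible sections are forced into $D^{\varphi}$ over a triple overlap with $c_{ijk}=1$. Reading $c$ instead as a scalar $\mathbb T$-valued twist gives the zero class in $H^3(S^2;\mathbb Z)=0$. So the algebra $A_c$ to which your first two steps are applied is never actually constructed.

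Conversely, when a path from $\mathrm{id}$ to $\varphi$ does exist, your loop depends on the paths chosen at the two junctions. Different choices give different classes in $\pi_1(\mathrm{Aut}(D))$ and in general different $d_2$. Your claim that this choice is absorbed by ``conventional identifications'' is therefore unjustified.

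\textbf{Step three.} Even for an honest locally trivial $D$-bundle $A$ over $S^2$ clutched by a loop $\gamma$, this step does not produce $\varphi_*-\mathrm{id}$. The overlap of the two hemispheres is a connected annulus, and
$K_q(C_0(S^1\times\mathbb R)\otimes D)\cong K_{q+1}(D)\oplus K_q(D)$.
The first summand is carried by the core circle and is killed by the untwisted inclusion into a hemisphere; the second is carried by a small disc.
\begin{itemize}
\item On the disc summand, the twisted inclusion acts by $\gamma(z_0)_*$ for a base point $z_0$ of the equator. You remove this by re-trivializing one hemisphere.
\item The only genuine effect of the twist is an off-diagonal degree-one map $\beta_\gamma:K_{q+1}(D)\to K_q(D)$ from the circle summand to the disc summand. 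It is determined by the homotopy class of the loop; up to sign it is the boundary map of $0\to C_0(\mathbb R^2)\otimes D\to A\to D\to 0$.
\item This $\beta_\gamma$ is $d_2$.
\end{itemize}

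The degree-zero endomorphism $\varphi_*-\mathrm{id}$ is what the same computation gives for the mapping torus of $\varphi$ over $S^1$. There the overlap of two arcs has two components, glued by $\mathrm{id}$ and by $\varphi$. That is a $\mathbb Z/2$-valued $1$-cocycle: it already shows up in $d_1$ as a local coefficient system, and it cannot occur over $S^2$ since $H^1(S^2;\mathbb Z/2)=0$.

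So the shift $q\mapsto q+1$ you flag as the main obstacle is not bookkeeping. A map $K_q(D)\to K_{q+1}(D)$ can only be called $\varphi_*-\mathrm{id}$ after an arbitrary identification $K_q(D)\cong K_{q+1}(D)$, which neither your construction nor the spectral sequence supplies. What your strategy can actually establish is $d_2=\beta_\gamma$ for a clutching loop $\gamma$. That, rather than an involution, is where a nonzero $d_2$ and a Morita obstruction should be sought.
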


\begin{remark}[Interpretation]
The map $d_2$ measures a two-dimensional holonomy obstruction: traversing a $2$-simplex in the nerve introduces the coefficient automorphism $\varphi$, and the resulting defect is detected on $K$-theory as $\varphi_*-\mathrm{id}$.
\end{remark}

\section{A Morita obstruction via nontrivial $d_2$}

\subsection{A cyclic coefficient regime and a concrete outcome}
We now specialize to a minimal concrete regime used in the discussion.

\begin{assumption}[Cyclic coefficient regime]\label{ass:cyclic}
Assume $n=4$ and that (under the chosen quantum PV/Baum--Connes computation)
\[
K_0(D)\cong K_1(D)\cong \mathbb Z/3.
\]
Assume moreover that there exists an involution $\varphi\in\mathrm{Aut}(D)$ such that
\[
\varphi_*=-1\quad\text{on }K_*(D).
\]
\end{assumption}

\begin{corollary}[Nontriviality of $d_2$]\label{cor:d2iso}
Under Assumption~\ref{ass:cyclic}, the differential of Theorem~\ref{thm:d2} satisfies
\[
d_2=\varphi_*-\mathrm{id}=(-1)-1=-2\equiv 1\pmod 3,
\]
hence $d_2$ is an isomorphism on $\mathbb Z/3$.
\end{corollary}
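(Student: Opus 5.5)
The plan is to reduce the claim to arithmetic in $\mathbb Z/3$ by taking Theorem~\ref{thm:d2} as given. Assumption~\ref{ass:cyclic} is meant to supply the coefficient reduction hypotheses of that theorem, namely $E^2_{2,q}\cong K_q(D)$ and $E^2_{0,q+1}\cong K_{q+1}(D)$ for $X=S^2$ with a good cover and $c$ nontrivial. One small point must be checked first: Theorem~\ref{thm:d2} presupposes the structural hypotheses on $D$ (Assumption (K) and coefficient reduction) under which the spectral sequence has the stated $E^2$-terms. I would state explicitly that these are in force alongside Assumption~\ref{ass:cyclic}. Once they are, Theorem~\ref{thm:d2} gives $d_2=\varphi_*-\mathrm{id}\colon K_q(D)\to K_{q+1}(D)$, up to the conventional identifications.

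The next step is to insert $\varphi_*=-1$ on $K_*(D)$, which yields $d_2=(-1)-1=-2$. This formula needs interpretation, since $d_2$ maps $K_q(D)$ to $K_{q+1}(D)$, which are different groups. Both are $\cong\mathbb Z/3$, and "$\varphi_*-\mathrm{id}$" only makes sense after the identifications of Theorem~\ref{thm:d2}. I would therefore fix generators $g_q\in K_q(D)$ that correspond under those identifications, so that $d_2(g_q)=-2g_{q+1}$. Then $-2\equiv 1 \pmod 3$ gives $d_2(g_q)=g_{q+1}$.

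Finally, any homomorphism $\mathbb Z/3\to\mathbb Z/3$ sending a generator to a generator is an isomorphism, since $2$ is a unit mod $3$. Even without normalizing generators, multiplication by $-2$ is invertible on $\mathbb Z/3$, and composing it with the fixed isomorphism $K_q(D)\cong K_{q+1}(D)$ still gives an isomorphism. So the conclusion does not depend on the choice of identification.

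The only real obstacle is bookkeeping: making sure the "conventional identifications" in Theorem~\ref{thm:d2} are group isomorphisms. They may introduce a sign, but not a non-invertible factor. Any sign or unit ambiguity is harmless, because the unit group $(\mathbb Z/3)^\times=\{\pm1\}$ preserves invertibility. I would state this explicitly rather than rely on the literal congruence $-2\equiv1$.
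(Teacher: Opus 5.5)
Your proposal is correct and follows the paper's route. The paper gives no separate proof: it substitutes $\varphi_*=-1$ into the formula $d_2=\varphi_*-\mathrm{id}$ of Theorem~\ref{thm:d2} and reads off that $-2$ is invertible in $\mathbb Z/3$. Your extra bookkeeping is a sensible tightening of what the paper leaves implicit in ``up to the conventional identifications.'' Treating $d_2$ as $\varphi_*-\mathrm{id}$ followed by some isomorphism $K_q(D)\cong K_{q+1}(D)$, and noting that any sign or unit ambiguity leaves invertibility intact, makes the conclusion independent of that choice. So does stating that the theorem's coefficient-reduction hypotheses must be in force, which Assumption~\ref{ass:cyclic} does not supply.
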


\subsection{Morita nontriviality}
Recall that strong Morita equivalence implies stable isomorphism and therefore induces isomorphisms on $K$-theory.

\begin{proposition}[Morita obstruction]\label{prop:Morita}
Assume the setup of Theorem~\ref{thm:d2} and Assumption~\ref{ass:cyclic}.
Let $A_0:=C(S^2)\otimes D$ denote the trivial (untwisted) field.
Then
\[
A_c \not\sim_M A_0.
\]
\end{proposition}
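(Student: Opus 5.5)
We argue by contradiction and compare the $K$-theory computed by the two \v{C}ech--MV spectral sequences. Strong Morita equivalence of separable C$^*$-algebras gives a stable isomorphism $A_c\otimes\mathbb K\cong A_0\otimes\mathbb K$ (Brown--Green--Rieffel), and hence $K_*(A_c)\cong K_*(A_0)$ as $\mathbb Z/2$-graded groups. So it suffices to show $K_*(A_c)\not\cong K_*(A_0)$.

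\textbf{The untwisted field.} By the K\"unneth theorem ($D$ satisfies the UCT under Assumption (K)), or equally by the untwisted spectral sequence, $K_*(C(S^2)\otimes D)\cong K^0(S^2)\otimes K_*(D)\oplus \operatorname{Tor}$-terms. Since $K^*(S^2)=\mathbb Z^2\oplus 0$ is torsion free, we get
\[
K_q(A_0)\cong K_q(D)^{\oplus 2}\cong(\mathbb Z/3)^2
\]
for $q=0,1$, so $|K_*(A_0)|=3^4$. Equivalently, in the untwisted spectral sequence the $E^2$-page has the columns $p=0,2$ equal to $K_*(D)$ and $p=1$ equal to zero. Since the $E^2$-page is concentrated in even columns, $d_2:E^2_{2,q}\to E^2_{0,q+1}$ vanishes; this matches Theorem~\ref{thm:d2} with $\varphi=\mathrm{id}$. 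Higher differentials vanish because $S^2$ has dimension $2$.

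\textbf{The twisted field.} The $E^2$-page of the spectral sequence for $A_c$ agrees with that of $A_0$, because $d_1$ is the same alternating restriction. By Theorem~\ref{thm:d2} and Corollary~\ref{cor:d2iso}, $d_2:E^2_{2,q}\to E^2_{0,q+1}$ is an isomorphism $\mathbb Z/3\to\mathbb Z/3$ for every $q$. Hence $E^3_{2,q}=0$ and $E^3_{0,q}=0$ for all $q$. Since $E^2_{1,*}=0$ and the nerve of a good cover of $S^2$ has homology only in degrees $\le 2$, with no room for $d_r$ for $r\ge 3$, we get $E^\infty=E^3=0$. Provided the spectral sequence converges strongly to $K_*(A_c)$ (it is a finite filtration, since the good cover is finite), this gives $K_*(A_c)=0\ne K_*(A_0)$, which is the contradiction. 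A more robust variant that avoids exact values is the order count: the $E^\infty$-page has total order at most $3^4/3^2=3^2$ strictly below $3^4$, because $d_2\neq0$ kills classes. This already distinguishes the two algebras.

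\textbf{Main obstacle.} The delicate step is justifying that $A_0$ and $A_c$ have identical $E^2$-pages, and that the spectral sequence for $A_c$ converges to $K_*(A_c)$ with the $d_2$ described in Theorem~\ref{thm:d2}. In particular one needs the coefficient reduction $K_*(I_{i_0\cdots i_p})\cong K_{*-1}(D)$ to hold for the twisted Fell-bundle pieces as well. I would handle this by noting that the twist lives only on triple overlaps, so the local pieces are untwisted. The convergence would come from the filtration of $A_c$ by the skeleta of the nerve, which is a finite chain of ideals, each with an associated MV sequence. Once this is in place, the comparison is pure counting of finite group orders.
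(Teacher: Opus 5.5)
Your argument is correct and follows the same route as the paper: Morita invariance of $K$-theory, the K\"unneth computation $K_*(A_0)\cong K_*(D)^{\oplus 2}$, and the nonzero $d_2$ of Corollary~\ref{cor:d2iso} shrinking $E^\infty$ for $A_c$. You are in fact sharper than the paper's sketch, since an isomorphism $d_2$ also kills the $p=0$ target, giving $E^\infty=0$ rather than merely removing the $p=2$ column. One side remark is wrong, though nothing depends on it: an $E^2$-page concentrated in even columns does not force $d_2\colon E^2_{2,q}\to E^2_{0,q+1}$ to vanish (only the odd $d_r$ are forced to vanish), so the untwisted $d_2=0$ should be read off from Theorem~\ref{thm:d2} with $\varphi=\mathrm{id}$, or from comparing the order of $E^2$ with the K\"unneth answer.
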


\begin{proof}[Proof sketch]
In the trivial case $A_0=C(S^2)\otimes D$, the Bott class contributes an additional copy of coefficient $K$-theory; under standard K\"unneth/UCT hypotheses one has
\[
K_*(A_0)\cong K_*(D)\oplus K_*(D).
\]
In contrast, Corollary~\ref{cor:d2iso} shows that the twisted spectral sequence carries a nontrivial $d_2$ which removes the corresponding $p=2$ contribution at $E^\infty$, forcing $K_*(A_c)\not\cong K_*(A_0)$.
Since $K$-theory is invariant under Morita equivalence, $A_c$ cannot be Morita equivalent to $A_0$.
\end{proof}

\begin{remark}[Groupoid reformulation]
Let $\mathcal G_{\mathfrak U}$ be the \v{C}ech groupoid of a good cover of $S^2$. The trivial field corresponds to the untwisted algebra $C^*(\mathcal G_{\mathfrak U})\otimes D\simeq C(S^2)\otimes D$.
The twisted algebra $A_c$ may be viewed as the C$^*$-algebra of a twisted Fell bundle over $\mathcal G_{\mathfrak U}$ controlled by $(\varphi,c)$.
Proposition~\ref{prop:Morita} states that this twist is not Morita-trivial, detected by the MV/\v{C}ech differential $d_2$.
\end{remark}

\section{Outlook: quantum-group applications}
The framework suggests several directions:
\begin{enumerate}[leftmargin=2em]
\item \textbf{Beyond trivial base action.}
Allow the quantum group to act nontrivially on the base (classical or quantum spheres), producing genuinely local coefficient systems and potentially higher differentials.

\item \textbf{Explicit involutions and classification.}
Under Assumption (K), one may seek explicit constructions of involutive automorphisms $\varphi$ with prescribed action on $K$-theory, guided by classification of finite group actions on Kirchberg algebras.

\item \textbf{Baum--Connes and assembly.}
One may connect the MV/\v{C}ech filtration to assembly maps for quantum groups, viewing twisted gluing as a controlled deformation of coefficient systems.

\item \textbf{Examples from free quantum groups.}
The coefficient algebra $D=\mathcal O_n\rtimes_r\widehat{O_n^+}$ is a natural testing ground because its $K$-theory is accessible via quantum PV-type sequences and because it packages quantum symmetry into a single fiber.
\end{enumerate}

\section{Conclusion and Future Directions}

In this paper we have reformulated the Mayer--Vietoris principle for
C$^*$-algebras in a \v{C}ech-theoretic framework adapted to quantum-group
crossed products.
Focusing on the case of free quantum groups, we showed that equivariant ideal
covers naturally give rise to a \v{C}ech--Mayer--Vietoris spectral sequence whose
$E^1$-page admits an explicit description in terms of the $K$-theory of a single
coefficient algebra
\[
D=\mathcal O_n\rtimes_r \widehat{O_n^+}.
\]
This viewpoint isolates the local-to-global mechanism underlying the classical
six-term Mayer--Vietoris exact sequence and places it into a systematic
spectral-sequence framework.

The main new feature of the present approach is the introduction of a minimal
higher twisting, governed by a $\mathbb Z/2$-valued \v{C}ech $2$-cocycle together
with an involutive automorphism of the coefficient algebra.
We showed that such a twist produces a nontrivial second differential
\[
d_2=\varphi_*-\mathrm{id}
\]
in the associated \v{C}ech--Mayer--Vietoris spectral sequence.
In a concrete cyclic coefficient regime, this differential becomes an
isomorphism and yields a $K$-theoretic obstruction to Morita triviality of the
resulting twisted algebra.
From a C$^*$-algebraic perspective, this identifies a precise mechanism by which
higher gluing data---invisible at the level of ordinary Mayer--Vietoris
boundaries---is detected by $K$-theory.

Conceptually, the twisted algebras considered here admit equivalent
interpretations as nontrivial continuous fields with constant fiber,
as twisted Fell-bundle C$^*$-algebras over \v{C}ech groupoids, or as
noncommutative analogues of bundle gerbes with quantum-group coefficients.
The appearance of a nonzero $d_2$ thus reflects a genuine two-dimensional
holonomy phenomenon in the sense of noncommutative topology.

\medskip
Several directions for further investigation naturally arise.

\begin{enumerate}
\item \textbf{Nontrivial base actions.}
In the present work the quantum group acts trivially on the base space.
Allowing nontrivial actions on classical or quantum spaces may lead to genuinely
local coefficient systems and potentially higher differentials beyond $d_2$.

\item \textbf{Automorphisms and finite group actions.}
The construction relies on involutive automorphisms of the coefficient algebra
with prescribed action on $K$-theory.
A systematic analysis of such automorphisms and finite group actions on quantum
group crossed products, within the Kirchberg--UCT framework, would clarify the
range of possible twists.

\item \textbf{Relation to assembly maps.}
It would be interesting to relate the \v{C}ech--Mayer--Vietoris filtration
developed here to Baum--Connes assembly maps for quantum groups, viewing twisted
gluing as a controlled deformation of coefficient systems.

\item \textbf{Higher-dimensional and non-$\mathbb Z/2$ twists.}
While the present paper focuses on the minimal $\mathbb Z/2$-twist over $S^2$,
the framework extends formally to higher-dimensional bases and more general
cohomological twisting data, where further higher differentials may appear.

\item \textbf{Applications to noncommutative geometry.}
The explicit realization of Morita obstructions via higher \v{C}ech data suggests
potential applications to the study of noncommutative vector bundles, twisted
$K$-theory, and defects in quantum-symmetric systems.
\end{enumerate}

We expect that the Mayer--Vietoris/\v{C}ech perspective developed here provides a
flexible organizational tool for future work at the interface of operator
algebras, quantum groups, and noncommutative topology.

\section*{Acknowledgements}
This note was prepared from exploratory calculations and discussions on Mayer--Vietoris methods for quantum-group crossed products.

\end{document}